\begin{document}

\topmargin-0.1in
\textheight8.5in
\textwidth5.5in
 
\footskip35pt
\oddsidemargin.5in
\evensidemargin.5in

 \newsymbol\kk 207C       

\newcommand{\V}{{\cal V}}      
\renewcommand{\O}{{\cal O}}
\newcommand{\LL}{\cal L}
\newcommand{\Ext}{\hbox{Ext}}
\newcommand{\pE}{\hbox{$^\pi\kern-2pt E$}}
\newcommand{\hQ}{\hbox{$\hat Q$}}
\newcommand{\phQ}{\hbox{$ '{\hat Q}$}}
\newcommand{\phd}{\hbox{$ '{\hat \delta}$}}

\newcommand{\lonto}{{\protect \longrightarrow\!\!\!\!\!\!\!\!\longrightarrow}}

\newcommand{\m}{{\frak m}}
\newcommand{\gl}{{\frak g}{\frak l}}
\newcommand{\ssl}{{\frak s}{\frak l}}
 
\renewcommand{\d}{\partial}

\newcommand{\ds}{\displaystyle}
\newcommand{\s}{\sigma}
\renewcommand{\l}{\lambda}
\renewcommand{\a}{\alpha}
\renewcommand{\b}{\beta}
\newcommand{\G}{\Gamma}
\newcommand{\g}{\gamma}

\newcommand{\C}{{\Bbb C}}
\newcommand{\N}{{\Bbb N}}
\newcommand{\Z}{{\Bbb Z}}
\newcommand{\ZZ}{{\Bbb Z}}
\newcommand{\K}{{\cal K}}
\newcommand{\ve}{{\varepsilon}}
\newcommand{\cupp}{{\star}}

\newcommand{\rowxy}{(x\ y)}
\newcommand{\colxy}{ \left({\begin{array}{c} x \\ y \end{array}}\right)}
\newcommand{\scolxy}{\left({\begin{smallmatrix} x \\ y
\end{smallmatrix}}\right)}

\renewcommand{\P}{{\Bbb P}}

\newcommand{\la}{\langle}
\newcommand{\ra}{\rangle}

\newtheorem{thm}{Theorem}[section]
\newtheorem{lemma}[thm]{Lemma}
\newtheorem{cor}[thm]{Corollary}
\newtheorem{prop}[thm]{Proposition}

\theoremstyle{definition}
\newtheorem{defn}[thm]{Definition}
\newtheorem{notn}[thm]{Notation}
\newtheorem{ex}[thm]{Example}
\newtheorem{rmk}[thm]{Remark}
\newtheorem{rmks}[thm]{Remarks}
\newtheorem{note}[thm]{Note}
\newtheorem{example}[thm]{Example}
\newtheorem{problem}[thm]{Problem}
\newtheorem{ques}[thm]{Question}
 
\numberwithin{equation}{section}

\newcommand{\onto}{{\protect \rightarrow\!\!\!\!\!\rightarrow}}
\newcommand{\donto}{\put(0,-2){$|$}\put(-1.3,-12){$\downarrow$}{\put(-1.3,-14.5) 

{$\downarrow$}}}

\newcounter{letter}
\renewcommand{\theletter}{\rom{(}\alph{letter}\rom{)}}

\newenvironment{lcase}{\begin{list}{~~~~\theletter} {\usecounter{letter}
\setlength{\labelwidth4ex}{\leftmargin6ex}}}{\end{list}}

\newcounter{rnum}
\renewcommand{\thernum}{\rom{(}\roman{rnum}\rom{)}}

\newenvironment{lnum}{\begin{list}{~~~~\thernum}{\usecounter{rnum}
\setlength{\labelwidth4ex}{\leftmargin6ex}}}{\end{list}}

\title{Quadratic algebras with Ext algebras generated in two degrees}

\keywords{Graded algebra, Koszul algebra, Yoneda algebra}

\author[T. Cassidy]{ }
\maketitle

\begin{center}

\vskip-.2in Thomas Cassidy \\
\bigskip

Department of Mathematics\\ Bucknell University\\
Lewisburg, Pennsylvania  17837
\\ \ \\

\end{center}

\setcounter{page}{1}

\thispagestyle{empty}

 \vspace{0.2in}

\begin{abstract}
\baselineskip15pt
We show that there exist non-Koszul graded algebras that appear to be Koszul up to any given cohomological degree. For any integer $m\ge 3$ we exhibit a non-commutative quadratic algebra for which the corresponding bigraded Yoneda algebra is generated in degrees $(1,1)$ and $(m,m+1)$.  The algebra is therefore not Koszul but is $m$-Koszul (in the sense of Backelin).  These examples answer a question of Green and Marcos \cite{GM}.
 \end{abstract}

\bigskip

\baselineskip18pt

\section{Introduction}
A connected graded algebra $A$  over a field $\kk$ with generators in degree one  is called  Koszul \cite{Priddy} if its associated bigraded Yoneda (or Ext) algebra $E(A)=\bigoplus_{m\le  n} Ext_A^{m,n}(\kk,\kk)$  is generated as an algebra by $Ext_A^{1,1}(\kk,\kk)$.    Koszul algebras are always quadratic, i.e. the elements in a minimal collection of defining relations will always be of degree two, however not every quadratic algebra is Koszul.   
A quadratic algebra $A$ will fail to be Koszul if and only if $Ext_A^{m,n}(\kk,\kk)\ne 0$ for some $m< n$.    

  The notion of $m$-Koszul described in \cite{PP} and credited to Backelin \cite{Bac}   serves as a measure of how close a graded $\kk$-algebra comes to being Koszul.   
The following definition of $m$-Koszul should not be confused with Berger's $N$-Koszul \cite{Berger}, which refers to $N$-homogeneous algebras with Yoneda algebras generated in degrees one and two.

\begin{defn} 
A graded algebra $A$ is called $m$-Koszul if    $\Ext^{ij}_A(\kk,\kk)=0$ for all\/ $i<j\le m$;  
 \end{defn}
While any quadratic algebra is 3-Koszul,  only a  Koszul algebra will be $m$-Koszul for every $m\ge 1$.  Conversely, if $A$   is $m$-Koszul for every $m\ge 1$ then $A$ is Koszul.  It is natural to ask whether a quadratic algebra could be $m$-Koszul for large values of $m$ and yet still fail to be Koszul. 

The purpose of this paper is to show that there exist non-Koszul graded algebras that appear to be Koszul up to any given cohomological degree.  Specifically, we show that for any integer $m\ge3$ there exists an $m$-Koszul algebra $C$ of global dimension $m$ for which the corresponding Yoneda algebra $E(C)$ is generated as an algebra in cohomology degrees one and $m$.     Therefore it is not possible to use a single $m$ to confirm Koszulity by means $m$-Koszulity.   

The algebra $C$  also satisfies the following two conditions:
 \begin{itemize}
    \item[(1)] $\Ext^{m,n}_C(\kk,\kk)=0$ unless $m=\delta(n)$ for a function $\delta:\N\to \N$;
    \item[(2)]  $\Ext_C (\kk,\kk)$
               is finitely generated.
  \end{itemize}
  Such algebras are called $\delta$-Koszul by Green and Marcos \cite{GM}.  In our case the function $\delta$ is given by 
  $$\delta(n)=\biggl\lbrace 
\begin{tabular}{c c}
\it n & \ if $n<m$  \\  
$n+1$  & \ if  $n=m$   \\
\end{tabular}
 $$ 
Our examples answer the third question posed Green and Marcos in \cite{GM}.  They ask if there is a bound $N$ such that if $A$ is a  $\delta$-Koszul algebra, then $E(A)$ is generated in degrees $0$ to $N$.  The algebras $C$  illustrate that   no such bound exists.  Moreover, the bound does not exist even if we restrict ourselves to quadratic algebras.

 A quadratic algebra $A$ is determined by a vector space
of generators $V=A_1$ and an arbitrary subspace of quadratic relations
$I\subset V\otimes V$. 
The free algebra $\kk\langle V\rangle$  carries a standard grading and $A$ inherits a grading from this free algebra.  We denote by $A_n$ the component of $A$ degree $n$.  For any graded algebra $A=\oplus_k A_k$, let $A[j]$ be the same vector space with the shifted grading $A[j]_k=A_{j+k}$. 
Throughout we assume our graded algebras $A$ 
 are locally finite-dimensional with $A_i=0$ for $i<0$ and $A_0=\kk$.

\section{The algebra $C$}

Let $m$ be an greater than 2. If $m=3$ the algebra $C$ has 10 generators and 8 relations.  If $m=4$ then $C$ has 12 generators and 14 relations.  For $n\ge 5$, $C$ has $3m$ generators and $4+3m$ relations.  
     The case  $m=3$ is already well known, and the case $m=4$ will be encompassed in the proof of Lemma \ref{B} as the algebra $B$, so  we will henceforth assume that $m\ge 5$.

 The algebra $C$ is defined as follows.  The generating vector space $V$ has the basis $\bigcup_{i=1}^{m+1}S_i$ with
 sets   $S_1=\{n\},$ $S_2=\{p,q,r\},$ $S_3=\{s,t,u\},$ $S_4=\{v,w,x_1,y_1,z_1\},$ $S_5=\{x_2,y_2,z_2\}, \ \cdots,\ S_{m-1}=\{x_{m-4},y_{m-4},z_{m-4}\},$ $S_{m}=\{x_{m-3},y_{m-3}\},$ and $S_{m+1}=\{x_{m-2}\}$. For all $m\ge 5$ the space of relations $I$ contains the generators $\{np-nq, np-nr, ps-pt, qt-qu, rs-ru, sv-sw, tw-tx_1, uv-ux_1, vx_2,wx_2, x_{i}x_{i+1}, sv-sy_1,tw-ty_1,ux_1-uy_1,sz_1,tz_1,uz_1, y_{i-1}x_{i}+z_{i-1}y_{i}\}$ where $i\le m-3$.  In addition, if   $m\ge 6$ then $I$ also contains $\{z_{i}z_{i+1}\}$ where $i\le m-5$. 
 
 \begin{rmk}
 We have  chosen this large set of generators   to clarify the exactness of the resolution below.  It may be possible to construct examples with fewer generators.
 \end{rmk}
 
 Notice that any basis for  $I$ is formed   from certain sums of elements of $S_i$ right multiplied by elements of $S_{i+1}$.  This ordering on a basis of $I$ makes $C$ highly noncommutative.  Indeed the center of $C$ is just the field $\kk=C_0$.  Moreover, this ordering tells us that the left annihilator of $n$ is zero and more generally that the left annihilator of an element of $S_i$ is generated by sums of elements from $\Pi_{k=j}^{i-1}S_{k}$ for $1\le j\le i-1$.      The structure of $I$ will be exploited in the proofs of Lemmas \ref{globaldim}, \ref{nokilla}, \ref{g'} and \ref{B}.
     
 Our proof relies on constructing an explicit projective resolution for $\kk$ as a left $C$-module.  Let $(P^{\bullet},\lambda)$ be the  sequence of projective $C$-modules
$$   \begin{array}{c c c c c c c c c c c c c c}  P^m&\buildrel {\lambda_m}\over\longrightarrow &P^{m-1}&\buildrel {\lambda_{m-1}}\over\longrightarrow &P^{m-2}&\cdots&    \buildrel {\lambda_2}\over\longrightarrow 
&P^1 & \buildrel {\lambda_1}\over\longrightarrow 
&C   \to \kk  \end{array}$$  
where $P^m=C[-m-1]$, $P^{m-1}=(C[1-m])^7$, $P^{m-2}= (C[2-m])^{16}$, $P^{2}=  (C[-2])^{3m+4}$,   $P^{1}=  (C[-1])^{3m}$, and for $3\le i\le m-3$,  $P^{i}=  (C[-i])^{3m+12-3i}$.

The map from $C$ to $\kk$ is the usual augmentation.  For convenience we will use $\lambda_i$ to denote both the map from $P^i$ to $P^{i-1}$ and the matrix which gives this map via right multiplication.   The map $\lambda_1$ is   right multiplication by the transpose of the matrix 
$$(n \ \ p\ \ q\ \ r\ \ s\ \ t\ \ u\ \ z_1\  \dots \ z_{m-4}\  \ v\  \ w\  \ x_1\  \ y_1\  \ x_2\  \ y_2\  \dots \   x_{m-3}\  \ y_{m-3}\  \ x_{m-2})$$ and the map $\lambda_m$ is   right multiplication by the the matrix 
$$\left({\begin{array}{cccccccccccc}0&0&0&0&np& np& -np&0&\cdots&0 \end{array}}\right).$$  

The remaining maps $\lambda_i$ will be defined as right multiplication by matrices given in block form, for which we will need the following components.  Let 
$$\alpha=  \left({\begin{array}{ccccccccc} 0&0&0&p&0&0&-p&p&0  \\ 0&0&0&0&q&0&0&-q&q \\ 0&0&0&0&0&r&-r&0&r   \end{array}}\right),  \ 
\alpha'=  \left({\begin{array}{cccccc} p&0&0&-p&p&0  \\ 0&q&0&0&-q&q \\ 0&0&r&-r&0&r   \end{array}}\right),$$  \ 
$$\beta=  \left({\begin{array}{cccc} s&-s&0&0  \\ 0&t&-t&0 \\ u&0&-u&0 \\ s&0&0&-s \\ 0&t&0&-t \\ 0&0&u&-u    \end{array}}\right),   
\beta'=  \left({\begin{array}{ccc} s&-s&0  \\ 0&t&-t \\ u&0&-u    \end{array}}\right),  \ 
 \gamma=  \left({\begin{array}{cc} v&0  \\ w&0 \\ x_1&0 \\ y_1& z_1  \end{array}}\right),$$   
 $$   \gamma'=  \left({\begin{array}{c} v  \\ w \\ x_1   \end{array}}\right), \   
   \chi_j=  \left({\begin{array}{cc} x_j&0  \\ y_j& z_j   \end{array}}\right),   
   \delta=  \left({\begin{array}{ccc}  -p&p&0  \\  0&-q&q \\  -r&0&r   \end{array}}\right), \  
   \epsilon=  \left({\begin{array}{c}s\\ t\\ u  \end{array}}\right),$$  
    $$\zeta_j=  \left({\begin{array}{ccccc}  z_1&0&0&\cdots&0  \\  0&z_2&0&\cdots&0 \\ 0&0&z_3&\cdots&0\\ \vdots&\vdots&&\ddots &\vdots \\ 0&0&0&\cdots&z_j    \end{array}}\right),  \ 
    \eta=(0,n,n,-n),\ \ \rm{and}$$  
    $$  \eta'=\left({\begin{array}{cccc}  0&n&-n&0 \\   0&n&0&-n    \end{array}}\right).$$
 
  The matrix defining map $\lambda_2$ has this  block form 
  $$\left({\begin{array}{cccccccccccc}   \eta'\\  & \delta\\  &&\epsilon\\    &&&\zeta_{m-5} \\    &&&&\beta \\  &&&&&\gamma \\  &&&&&&\chi_2 \\&&&&&&&\ddots \\&&&&&&&&\chi_{m-4} \\ &&&&&&&&&x_{m-3}
   \end{array}}\right).$$ 
  
 The matrix   $\lambda_3$ has the form
 $$\left({\begin{array}{cccccccccccc} 0&  \eta\\  && \delta\\  &&&\epsilon\\    &&&&\zeta_{m-6} \\    &&&&&\alpha' \\  &&&&&&\beta \\  &&&&&&&\gamma \\  &&&&&&&&\chi_2 \\&&&&&&&&&\ddots \\&&&&&&&&&&\chi_{m-5} \\ &&&&&&&&&&&x_{m-4}
   \end{array}}\right).$$     
 For $4\le j \le m-3$ the matrix   $\lambda_j$ has the form
 $$\left({\begin{array}{cccccccccccc}   \eta\\  & \delta\\  &&\epsilon\\    &&&\zeta_{m-j-3} \\    &&&&\alpha \\  &&&&&\beta \\  &&&&&&\gamma \\  &&&&&&&\chi_2 \\&&&&&&&&\ddots \\&&&&&&&&&\chi_{m-j-2} \\ &&&&&&&&&&x_{m-j-1}
   \end{array}}\right).$$  
   Note that $\chi_2$ is the only $\chi$ block in $\lambda_{m-4}$ and that the matrix $\lambda_{m-3}$ has no $\zeta$ or $\chi$ blocks.
   
    The matrix   $\lambda_{m-2}$ has the   form $$\left({\begin{array}{ccccccc}   \eta\\  & \delta\\   &&\alpha \\      & &&\beta \\  &&&&\gamma'  
   \end{array}}\right),$$ 
   and the matrix   $\lambda_{m-1}$ has the  form  $$\left({\begin{array}{cccc}   \eta\\  & \alpha\\  &&\beta'\\         
   \end{array}}\right).$$ 

\begin{lemma}\label{globaldim}
Let $(Q^\bullet,\phi)$ be a minimal projective resolution of $_C\kk$ where the map $Q^i\to Q^{i-1}$ is given as right multiplication by a matrix $\phi_i$.  Then the matrices $\phi$ can be chosen to have block form such that all the entries in $\phi_i$ are elements from the subalgebra generated by the set $\cup^{m+2-i}_{j=1} S_j$.
\end{lemma}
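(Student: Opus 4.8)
The plan is to exploit the layered form of the relations by introducing an auxiliary grading. Assign to each generator $g\in S_j$ the \emph{weight} $j$ and extend it multiplicatively to monomials. Inspecting the defining relations shows that each is weight-homogeneous: every element of the given basis of $I$ is a sum or difference of two paths of the shape $(\cdot\in S_k)(\cdot\in S_{k+1})$, so its terms all have weight $2k+1$. Hence $C$ carries an auxiliary $\N$-grading by weight in addition to its standard grading, and $\kk=C_0$ sits in weight $0$. A minimal projective resolution $(Q^\bullet,\phi)$ of ${}_C\kk$ may therefore be taken weight-graded: each $Q^i$ has a weight-homogeneous basis, and every entry $\phi_i[b,a]$ is weight-homogeneous, of weight equal to the difference of the weights of the two basis elements it joins. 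Note that a weight-homogeneous element which is a sum of single generators involves only generators from one set $S_k$, while elements of higher standard degree involve strictly lower-indexed sets; thus the quantity to control is the top index $k$ of a set $S_k$ contributing to an entry, not the weight itself.

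I would then argue by induction on $i$, with inductive claim exactly the assertion of the lemma: the resolution can be chosen so that no generator from $S_k$ with $k>m+2-i$ occurs in any entry of $\phi_i$. For $i=1$ the map $\phi_1$ is right multiplication by the column of all generators, whose entries are single generators from $S_1,\dots,S_{m+1}$, giving the base case since $m+2-1=m+1$. For the step, assume $\phi_{i-1}$ avoids all $S_k$ with $k>m+3-i$. The module $Q^i$ is built from a minimal generating set of $\ker\phi_{i-1}$, which by weight-homogeneity may be taken weight-homogeneous; each such generator records a left-annihilation relation $\sum_b c_b\,\mathrm{col}_b=0$ among the columns of $\phi_{i-1}$. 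The layered structure of $I$ now forces the coefficients down by one layer: since the entries of $\phi_{i-1}$ reach at most the set $S_{m+3-i}$, and---by the description of left annihilators recalled above---the left annihilator of an element of $S_k$ is generated by elements of $S_{k-1}$, the syzygy coefficients $c_b$ can be chosen so that their top contributing set is $S_{m+2-i}$. This is precisely the bound required for $\phi_i$.

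The crux is the claim that the minimal syzygies of $\phi_{i-1}$ are generated by these one-layer-lower annihilators, and never require a coefficient reaching the current top set $S_{m+3-i}$ or higher. This is exactly where the full force of the relation structure enters: because every relation couples only two consecutive sets $S_k,S_{k+1}$, a cancellation occurring in $Q^{i-2}$ at index $k$ can be produced only by left multiplication with an element of $S_{k-1}$, so no syzygy coefficient needs a generator above index $m+2-i$. I would corroborate the outcome against the blocks displayed before the lemma: the top entry of $\phi_2$ is the single generator $x_{m-3}\in S_m$, arising as the annihilator of the top generator $x_{m-2}\in S_{m+1}$ through $x_{m-3}x_{m-2}=0$, while at the final step the entries of $\phi_m$ degenerate to the degree-two paths $np\in\langle S_1,S_2\rangle$; in each case the top set is $S_{m+2-i}$, as claimed. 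Since minimal projective resolutions are unique up to isomorphism, the resolution produced by this choice is the asserted $(Q^\bullet,\phi)$, and the proof is complete.
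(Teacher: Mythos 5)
Your argument is correct and takes essentially the same route as the paper: an induction on $i$ in which the layered form of the relations (each coupling only $S_k$ with $S_{k+1}$) forces the top index of the sets $S_j$ occurring in the entries of $\phi_i$ to drop by one at each step. The auxiliary weight grading is a pleasant extra formalization of why the entries may be chosen homogeneously, but, as you note yourself, it does not control the top index; the load-bearing step is the same one-layer-drop observation about left annihilators that the paper uses.
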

\begin{proof} We prove this by induction on $i$.  $\phi_1$ can be chosen to be  $\lambda_1$, which has entries from $V=\cup_{j=1}^{m+1} S_j$.  Since the set $\phi_2\phi_1$ must span the   space  $I$, $\phi_2$ can be chosen to be $\lambda_2$, where the blocks have entries of the appropriate form.    Now suppose   $\phi_i$ has block form with entries from the subalgebra generated by  $\cup^{m+2-i}_{j=1} S_j$. 
Since   the rows of $\phi_{i+1}$ must annihilate the columns of $\phi_i$,   $\phi_{i+1}$ can be chosen to have block form corresponding to the blocks of $\phi_i$.   Recall that any basis for $I$ is ordered so that only elements of $S_j$  appear on the left of elements of $S_{j+1}$.  Since the entries in $\phi_i$ contain no elements from $\cup^{m+1}_{j=m+3-i} S_j$, no elements from $\cup^{m+1}_{j=m+2-i} S_j$ will appear in entries of $\phi_{i+1}$.  Thus the entries in  $\phi_{i+1}$ are from the subalgebra generated by the set $\cup^{m+1-i}_{j=1} S_j$.
 \end{proof}

\begin{rmk} The lemma implies that $\phi_{m+1}$, if it exists, can only contain elements of $S_1$ and that there can be no map $\phi_{m+2}$.  Therefore a minimal resolution of $_{C}\kk$ would have length  no more than $m+1$ and so the global dimension of $C$ is at most $m+1$. We will see later that the global dimension of $C$ is exactly $m$.
\end{rmk}

\begin{lemma}\label{nokilla}
The left annihilators of $\eta$, $\eta'$, $\lambda_m$ and $\alpha$ are zero.
\end{lemma}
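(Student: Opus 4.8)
The plan is to reduce each of the four claims, by inspecting the column structure of the matrix, to the statement that certain single generators (or the product $np$) have zero left annihilator, and then to invoke the description of left annihilators recorded after the definition of $I$. Here a \emph{left annihilator} of a matrix $M$ means a row vector $\xi$ over $C$ with $\xi M=0$, i.e.\ the kernel of right multiplication by $M$.

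First I would handle $\eta$ and $\eta'$. Since $\eta=(0,n,n,-n)$ is a single row, a left annihilator is a scalar $\xi\in C$ with $\xi\eta=(0,\xi n,\xi n,-\xi n)=0$, forcing $\xi n=0$; and for $\xi=(\xi_1,\xi_2)$ one computes $\xi\eta'=(0,(\xi_1+\xi_2)n,-\xi_1 n,-\xi_2 n)$, forcing $\xi_1 n=\xi_2 n=0$. In either case every entry lies in the left annihilator of $n$, which is zero. For $\alpha$, writing $\xi=(\xi_1,\xi_2,\xi_3)$, the fourth, fifth and sixth columns of $\xi\alpha$ are precisely $\xi_1 p$, $\xi_2 q$ and $\xi_3 r$, so $\xi\alpha=0$ already gives $\xi_1 p=\xi_2 q=\xi_3 r=0$. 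Finally the only nonzero entries of $\lambda_m$ equal $\pm np$, so $\xi\lambda_m=0$ is equivalent to $\xi\,(np)=0$. The lemma therefore reduces to showing that the left annihilators of $p$, $q$, $r$, and of the product $np$, all vanish.

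For $p$, $q$, $r\in S_2$ I would apply the annihilator description with $i=2$: the left annihilator of such a generator is generated by sums of elements of $\prod_{k=1}^{1}S_k=S_1$, that is, by scalar multiples of $n$. A multiple $\lambda n$ annihilates $p$ only if $\lambda\,np=0$, and $np$ is nonzero in $C_2$ because no $\kk$-combination of the defining relations equals $n\otimes p$ (every relation containing $n\otimes p$ also contains $n\otimes q$ or $n\otimes r$, and these cannot be cancelled). Hence $\lambda=0$, so the left annihilator of $p$ is zero; the same computation with $nq\ne0$ and $nr\ne0$ disposes of $q$ and $r$. For the remaining case I would factor $\xi\,(np)=(\xi n)p$: the vanishing $(\xi n)p=0$ places $\xi n$ in the left annihilator of $p$, just shown to be zero, so $\xi n=0$, and then $\xi=0$ because the left annihilator of $n$ is zero. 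This settles $\lambda_m$.

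The genuine content of the argument is the reduction to single-generator annihilators and the verification that $np$, $nq$, $nr$ do not vanish in $C_2$; the structural facts about annihilators in $C$, which follow from the ordered form of $I$, then carry everything else. The main obstacle would arise only if one could not appeal to those structural facts: in that case one would have to prove directly that right multiplication by $p$ is injective on all of $C$ (not merely on $C_1$), which would require a normal-form or Gr\"obner-basis analysis of the ideal generated by $I$.
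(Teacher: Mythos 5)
Your proposal is correct and follows essentially the same route as the paper: reduce each matrix, via the columns containing a single generator, to the vanishing of the left annihilators of $n$, $p$, $q$, $r$ (and $np$), and then invoke the ordered structure of $I$, which forces any annihilator of an $S_2$-element to be a multiple of $n$ while $np=nq=nr\ne 0$. You merely spell out more carefully the step the paper leaves implicit for $\lambda_m$ (factoring $\xi(np)=(\xi n)p$), which is a harmless refinement rather than a different argument.
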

\begin{proof}  The relations for $C$ make it clear that nothing annihilates $n$ from the left, and consequently $\eta$, $\eta'$ and $\lambda_m$ cannot be annihilated from the left.  Since the entries in $\alpha$ are all from $S_2$, the annihilator would have to be made from left multiples of $n$.  However  $p$, $q$ and $r$ each appear alone in the first columns of $\alpha$ and $n$ does not annihilate these individually.
\end{proof}

\begin{lemma}\label{g'} 
The rows of $\gamma'$ generate the left annihilator of $x_2$.
\end{lemma}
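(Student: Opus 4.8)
The plan is to prove the two inclusions separately, the containment $Cv+Cw+Cx_1\subseteq\{a\in C:ax_2=0\}$ being immediate and the reverse inclusion carrying all the work. For the easy direction, note that $vx_2$, $wx_2$ and $x_1x_2$ all lie in $I$ (the last of these is the relation $x_ix_{i+1}$ with $i=1$), so $v$, $w$ and $x_1$ each kill $x_2$ on the right, and hence so does every element of the left ideal they generate.

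For the reverse inclusion I would fix a $\kk$-basis of $C$ consisting of the standard monomials for an admissible rewriting system read off from the relations, with rules $sw,sy_1\to sv$, $tx_1,ty_1\to tw$, $ux_1,uy_1\to uv$ (and the analogous rules for $n,p,q,r$), the monomial rules $vx_2,wx_2,x_1x_2\to0$, $x_ix_{i+1}\to0$, $sz_1,tz_1,uz_1\to0$, $z_iz_{i+1}\to0$, and the rule $y_{i-1}x_i\to-z_{i-1}y_i$. The entire computation then comes down to how right multiplication by $x_2\in S_5$ acts on a standard monomial $M$, and by the layering of $I$ this depends only on the last letter of $M$. There are three cases. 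If $M$ ends in $v$, $w$ or $x_1$, then $Mx_2=0$. If $M=M'y_1$ ends in $y_1$, then $Mx_2=-M'z_1y_2$; here standardness of $M$ forces the last letter of $M'$ to avoid $s,t,u$, so $M'z_1$ does not reduce and $Mx_2$ is a nonzero standard monomial ending in $y_2$. Finally, if $M$ ends in $z_1$ or in any generator outside $S_4$, then $Mx_2$ is itself a nonzero standard monomial ending in $x_2$. Distinct $M$ in the last two cases yield distinct standard monomials --- those ending in $y_2$ and those ending in $x_2$ form disjoint families --- so right multiplication by $x_2$ is injective on the span of all standard monomials not ending in $v,w,x_1$. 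Writing $a$ with $ax_2=0$ in this basis, the kernel is therefore spanned by the standard monomials ending in $v$, $w$ or $x_1$, and since the reduction rules whose right-hand sides involve $v,w,x_1$ preserve this class, that span is exactly $(Cv+Cw+Cx_1)$, as desired.

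The step I expect to be the main obstacle is justifying that the chosen rewriting system is confluent, so that the standard monomials really form a basis and the three reductions above are the correct ones. The critical overlaps are $sy_1x_2$, $ty_1x_2$, $uy_1x_2$ and the chain overlaps $y_{i-1}x_ix_{i+1}$: the first three resolve to $0$ in both directions precisely because $sz_1=tz_1=uz_1=0$, and the chain overlaps resolve because $z_iz_{i+1}=0$. Notably, these same relations are what prevent the entanglement coming from $y_1x_2=-z_1y_2$ from producing any new left annihilator. The one overlap in the chain that fails to be confluent sits at the very top and produces an obstruction word ending in $x_{m-2}$; because this never ends in $x_2$, it is invisible to right multiplication by $x_2$, and the case analysis above remains valid in every degree. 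Verifying this confluence bookkeeping --- together with the fact that appending $x_2$ to a standard monomial triggers no higher-degree reduction --- is the only delicate point; the algebraic heart of the argument is simply the vanishing and non-vanishing of the length-two suffixes $vx_2$, $y_1x_2=-z_1y_2$ and $z_1x_2$ forced by the relations, exactly in the spirit of Lemma \ref{nokilla}.
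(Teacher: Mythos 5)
Your overall strategy --- exhibit a monomial basis via a rewriting system and then compute the kernel of right multiplication by $x_2$ according to the last letter of each standard monomial --- is a legitimate alternative to the paper's argument, but the confluence claim on which everything rests is false, and this is a genuine gap rather than deferrable bookkeeping. You assert that the only overlap failing to resolve is $y_{m-4}x_{m-3}x_{m-2}$ at the top of the chain. In fact the overlaps inside the $n,p,q,r,s,t,u$ widget already fail: with your orientation the word $nqu$ reduces to $npu$ via $nq\to np$, but also to $nqt\to npt\to nps$ via $qu\to qt$, and $npu$ and $nps$ are distinct irreducible words that are equal in $C$ (indeed $nps=npt=npu$, which is exactly why $S_1S_2S_3$ spans a one-dimensional subspace of $C_3$). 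So any Gr\"obner basis must contain a cubic element such as $npu-nps$, and resolving the new overlap $npux_1$ then forces a quartic element such as $npsx_1-npsv$ (reflecting the identity $npsv=npsw=npsx_1$ in $C_4$ that the paper exploits in Lemma \ref{B}). Until the full Gr\"obner basis is in hand, your ``standard monomials'' are only a spanning set, not a basis, so the assertion that the kernel is spanned by the standard monomials ending in $v$, $w$ or $x_1$ has no foundation; worse, one of the missing leading words ends in $x_1$, so it interferes directly with your classification by last letter. The argument can likely be repaired, but that repair is substantially more work than the lemma requires.

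The paper avoids all of this by using the layering of the relations: since every relation is a sum of products from $S_iS_{i+1}$, the left annihilator of $x_2\in S_5$ is generated by sums of elements of $S_4$, $S_3S_4$, $S_2S_3S_4$ and $S_1S_2S_3S_4$. One then checks in the first layer that only the combinations of $v$, $w$ and $x_1$ inside the span of $S_4$ kill $x_2$ (because $y_1x_2=-z_1y_2$ and $z_1x_2$ are nonzero and independent in $C_2$), and in the second layer that $S_3S_4$ spans a six-dimensional subspace with basis $\{sv,sx_1,tv,tw,uv,uw\}$, every element of which is already a left multiple of $v$, $w$ or $x_1$; the deeper layers then contribute nothing new. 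If you wish to keep your approach, you should at least confine the rewriting analysis to the words that can actually sit to the left of $x_2$ --- which is, in effect, what the paper's layered argument does in two short degree checks with no confluence verification at all.
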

\begin{proof}  The left annihilator of $x_2$ can have generators made from sums of elements in $S_4$, $S_3S_4$, $S_2S_3S_4$ or $S_1S_2S_3S_4$.  It is easy to check that linear combinations of $v$, $w$ and $x_1$ are the only elements in the span of $S_4$ that annihilate $x_2$.  The elements of $S_3S_4$ span a six dimensional subspace of $C_2$ with basis $\{sv, sx_1, tv, tw, uv, uw\}$ and these are all left multiples of $v$, $w$ or $x_1$.  It follows that in $C$ the elements of  $S_2S_3S_4$ and $S_1S_2S_3S_4$ are also all left multiples of $v$, $w$ or $x_1$.
\end{proof}

\begin{lemma}\label{B}
The left annihilator of $\beta$ is generated by the rows of $\alpha$, the left annihilator of $\delta$ is generated by $\eta$,  the left annihilator of $\beta'$ is generated by $\lambda_{m+1}$, and  the left annihilator of $\gamma'$ is generated by the rows of $\beta'$.
\end{lemma}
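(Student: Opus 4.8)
My plan is to read each of the four assertions as the block-local exactness statement it encodes, and to prove each by two inclusions: that the listed rows lie in the relevant left annihilator, and that they exhaust it. The first inclusion is in every case a direct calculation with the defining relations of $C$. The identity expressing that the rows of $\alpha$ annihilate $\beta$ is $\alpha'\beta=0$, which reduces to $ps=pt$, $qt=qu$, $rs=ru$; the statement for $\delta$ is $(n,\ n,\ -n)\delta=0$, which reduces to $np=nq=nr$; the vanishing of $(np,\ np,\ -np)\beta'$ follows from $np=nq=nr$ together with $ps=pt$, $qt=qu$, $rs=ru$; and $\beta'\gamma'=0$ reduces to $sv=sw$, $tw=tx_1$, $uv=ux_1$. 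I would record these four computations first, since they also fix the signs in the blocks.

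For the reverse inclusion I would exploit the ordered structure of $I$ noted after the definition of $C$: because elements of $S_j$ occur only to the left of elements of $S_{j+1}$, the left annihilator of a matrix whose nonzero entries lie in $S_j$ is generated by rows with entries in $S_{j-1}$, together with their left $C$-multiples. This reduces each assertion to finite linear algebra in a single graded piece of $C$. Concretely I would take a general annihilating row with scalar coefficients on the appropriate generators, expand the product in an explicit basis of the relevant degree-two component, and solve the resulting homogeneous system over $\kk$. The bases I would use are the one-dimensional space $S_1S_2$ (collapsed by $np=nq=nr$) for $\delta$, the set $\{ps,pu,qs,qt,rs,rt\}$ for $S_2S_3$, and the basis $\{sv,sx_1,tv,tw,uv,uw\}$ of $S_3S_4$ identified already in the proof of Lemma \ref{g'}. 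These computations return the one-dimensional space spanned by $(n,\ n,\ -n)$ for $\delta$, the three-dimensional span of the rows of $\alpha'$ for $\beta$, and the three-dimensional span of the rows of $\beta'$ for $\gamma'$, matching the claims.

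To upgrade these lowest-degree solutions to a statement about generators of the annihilator as a left $C$-module, I would induct on internal degree, peeling off one exhibited generator at a time and controlling the remainder by the same kind of degree analysis used in Lemma \ref{g'}, together with the triviality of the left annihilators of $n$, $\eta$, $\eta'$ and $\alpha$ from Lemma \ref{nokilla}. At each step a higher-degree annihilator is shown to be a left multiple of the exhibited rows, so no new generators appear above the base degree.

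The main obstacle is the third assertion, the left annihilator of $\beta'$. Here the naive computation returns nothing: running the system above with $S_2$-coefficients in the basis $\{ps,pu,qs,qt,rs,rt\}$ yields only the trivial solution, so $\beta'$ has no annihilator whose entries lie in degree one. I would establish this no-solution fact carefully, and then show that the annihilator is first populated one degree later by the single row $(np,\ np,\ -np)$ --- the nonzero part of the final map $\lambda_m$ --- and that this row generates everything above it, using the triviality of the left annihilators of $n$ and $\eta$ from Lemma \ref{nokilla} to exclude competing degree-two solutions. This degree jump is precisely the mechanism that forces a Yoneda generator in bidegree $(m,m+1)$, so verifying it is at once the technical crux of the lemma and the conceptual heart of the paper.
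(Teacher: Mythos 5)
Your route is genuinely different from the paper's, and it is worth being clear about what you are giving up. The paper does not compute these four annihilators head-on: it introduces the auxiliary algebra $B$, identifies $B$ as the associated graded algebra of a splitting algebra of a layered graph, imports the Hilbert series $H_B(g)=(1-13g+14g^2-7g^3+g^5)^{-1}$ from that theory, and uses the Euler-characteristic identity $P_B(-1,g)=H_B(g)^{-1}$ together with the degree restrictions of Lemma \ref{globaldim} to pin the minimal resolution of $_B\kk$ down to three unknown multiplicities $d_1,d_2,d_3$; these are then killed by short direct checks, and exactness of $(R^\bullet,\psi)$ delivers all four annihilator statements at once. You propose to replace the Hilbert-series input by direct multigraded linear algebra. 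That is viable in principle, because the relations of $C$ are homogeneous for the decomposition of $V\otimes V$ into the pieces $\mathrm{span}(S_i)\otimes\mathrm{span}(S_j)$, so each annihilator reduces to finitely many finite-dimensional computations, one for each descending string $S_{j-1}$, $S_{j-2}S_{j-1}$, \dots, $S_1\cdots S_{j-1}$; and your base-degree computations, including the identification of the degree jump for $\beta'$ and the generator $(np,\ np,\ -np)$, are correct.

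There are, however, two concrete gaps. First, your stated reduction principle --- that the left annihilator of a matrix with entries in $S_j$ is generated by rows with entries in $S_{j-1}$ and their left $C$-multiples --- is false as written; your own treatment of $\beta'$ is the counterexample, since its annihilator is first populated in internal degree two. The principle you actually need, and must state and justify, is the one implicit in Lemma \ref{g'}: a \emph{minimal} generator of the annihilator can only have entries in one of the descending products listed above, while components in every other multidegree are automatically left multiples of lower-degree annihilator elements. Second, and more seriously, you carry out the higher-multidegree checks only for $\beta'$. For $\gamma'$ you stop at the $S_3S_4$ computation, but the claim also requires ruling out new annihilator generators with entries in $S_2S_3$ and in $S_1S_2S_3$. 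The latter is exactly the computation the paper cannot avoid even after importing the Hilbert series: $S_1S_2S_3$ is one-dimensional with basis $nps$, and $npsv=npsw=npsx_1$ forces $h_1+h_2+h_3=0$, so any such annihilator is $np$ times a row of $\beta'$; without this one cannot exclude an extra generator of the annihilator of $\gamma'$ in internal degree three (equivalently $\mathrm{Ext}_B^{3,5}\neq 0$), which would destroy the exactness the lemma encodes. The analogous $S_1S_2$ checks for $\beta$ and for the upgrade step on $\delta$ (where you implicitly need right multiplication by $p$ and by $np$ to be injective in the relevant degrees) are likewise absent; these are precisely the verifications the paper obtains for free from the vanishing of the $g^4$ coefficient of $H_B^{-1}(g)$. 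Until those finitely many computations are written down, the proof is incomplete.
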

\begin{proof}   We consider an algebra  $B$ closely related to $C$.     
Let $B$ be the algebra with generators $\{n,p,q,r,s,t,u,v,w,x_1, y_1, a, b\}$ and defining relations $\{np-nq, np-nr, ps-pt, qt-qu, rs-ru, sv-sw, tw-tx_1, uv-ux_1, va-vb, wa-wb, x_1a-x_1b\}$.  We will resolve $\kk$ as  a  $B$-module using maps made from the same blocks as those which appear in the resolution of $\kk$ as a $C$ module.

 Let $(R^{\bullet},\psi)$ be this  sequence of projective $B$-modules
$$\begin{array}{ccc c c c cc c c c c c} &R^4&&R^3&&R^2&&R^1&&R^0\\
0  \rightarrow &B[-5] &\buildrel {\psi_{4}}\over\longrightarrow  & B[-3]^7&\buildrel {\psi_{3}}\over\longrightarrow &B[-2]^{14}&   \buildrel {\psi_2}\over\longrightarrow 
&B[-1]^{13} & \buildrel {\psi_1}\over\longrightarrow 
&B  & \to \kk  \end{array}$$  
The map from $B$ to $\kk$ is the usual augmentation,  the map $\psi_1$ is  right multiplication by the transpose of the matrix $$\left({\begin{array}{ccccccccccccc}n & p & q & r &s& t & u &  v & w & x_1& a & b\end{array}}\right),$$   
the map $\psi_2$ is   right multiplication by the the matrix $$\left({\begin{array}{ccccccc}   \eta'\\  & \delta\\   &&\beta\\&&&\gamma'&-\gamma' 
   \end{array}}\right),$$
   the map $\psi_3$ is   right multiplication by the the matrix $$\left({\begin{array}{ccccc}   0&\eta\\  & &\alpha\\  & &&\beta'   \end{array}}\right),$$
and the map $\psi_4$ is  right multiplication by the the matrix $$\left({\begin{array}{ccccccc}  0&0&0&0&np&np&-np
   \end{array}}\right).$$
Since the product $\psi_i\psi_{i-1}$ is zero in $B$, $R^\bullet$ is a complex.  The list of generators and relations for $B$ ensure that  this complex is exact at $R^1$ and $R^0$.     Our goal is to show that $(R^\bullet,\psi)$ is a minimal projective resolution of $_B\kk$.
 
 We observe that $B$ is the associated graded algebra for the splitting algebra (see \cite{RSW3}) corresponding to the layered graph below.

   \includegraphics[height=13cm]{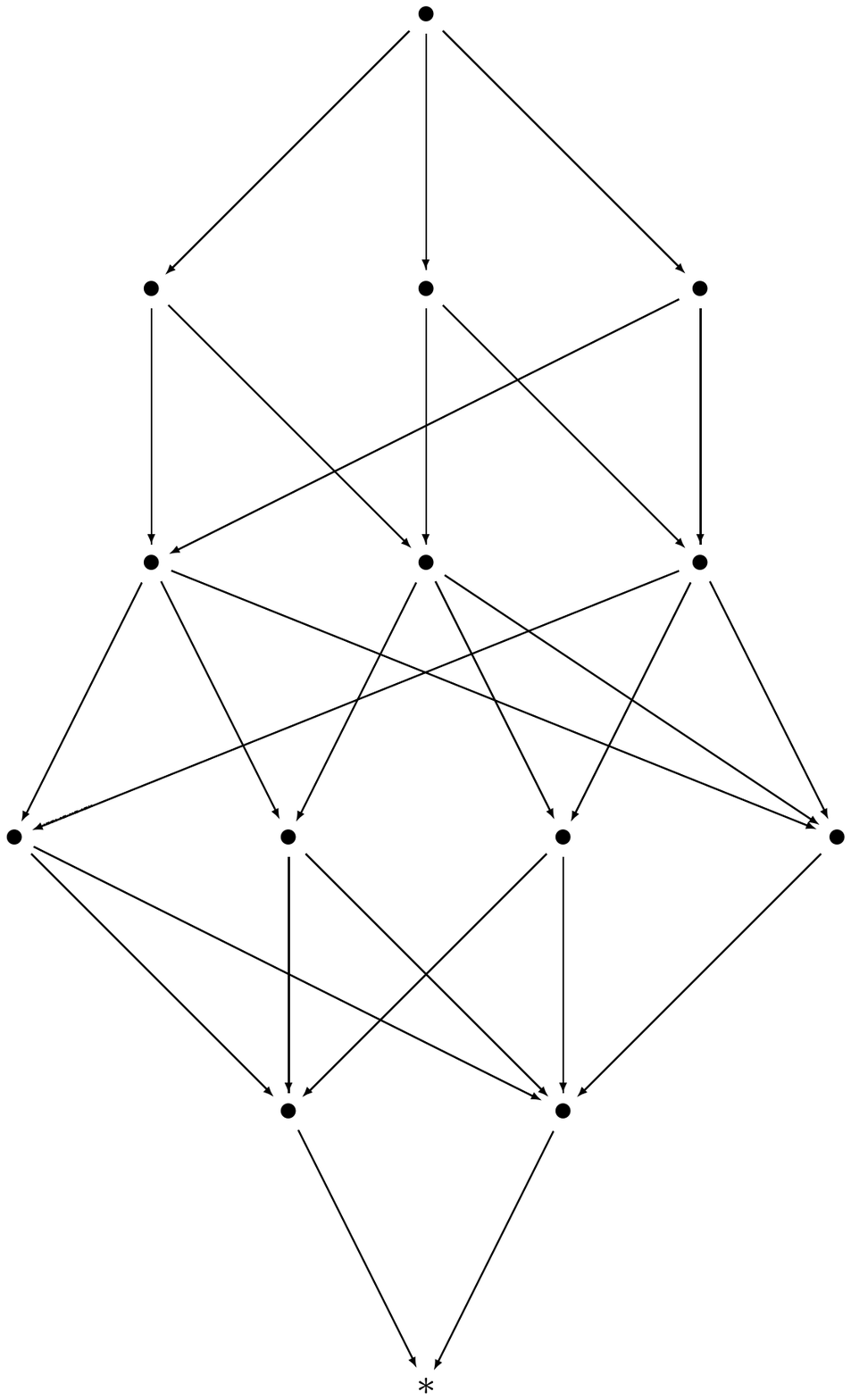} 
 \vskip-1cm
The methods of \cite{RSW3} (see also \cite{RSW1}) show that $B$ has Hilbert series $H_B(g)=(1-13g+14g^2-7g^3+g^5)^{-1}$.   
Let $P_B(f,g)=\ds\sum_{i,j=0}^\infty dim(Ext_B^{ij}(\kk,\kk)f^ig^j$ be the double Poincar\'e series for $B$.   From the formula $P_B(-1,g)=H_B^{-1}(g)=1-13g+14g^2-7g^3+g^5$  we can deduce  something about the shape of a minimal projective resolution of the $B$-module $\kk$.  Moreover, the entries in the maps of a minimal  projective resolution come from certain sets  as in   Lemma \ref{globaldim}.  It follows that the resolution must have the form:  
$$   \begin{array}{c c c c c c c c c c c c c } 0&\to & B[-5]^{d_3-d_2}  \longrightarrow &R^4\oplus B[-5]^{d_3}\oplus B[-4]^{d_1} & \longrightarrow  &  \end{array}$$
$$   \begin{array}{c c c c c c c c c c c c c }
R^3\oplus B[-4]^{d_1}\oplus B[-5]^{d_2}& \longrightarrow &R^2&   \buildrel {\psi_2}\over\longrightarrow  
&R^1 & \buildrel {\psi_1}\over\longrightarrow 
&R^0  & \to \kk  \end{array}$$  
We will show that $d_1=d_2=d_3=0$ so that $R^\bullet$ is in fact a resolution
of $_B\kk$. 

Consider first the possibility that $d_1>0$.  This can only happen if $\eta$, $\alpha$ or $\beta'$ are annihilated from the left by a linear vector.  Clearly nothing annihilates $\eta$ from the left and by Lemma \ref{nokilla} nothing  annihilates $\alpha$ from the left.  Suppose the vector  
 $$\left({\begin{array}{ccccc} e_1& e_2& e_3 \end{array}}\right)$$  
left annihilates $\beta'$ where each $e_i$ is a sum of elements from $S_2$.  Then 
 $$\vec{e}=\left({\begin{array}{cccccccc} e_1& e_2& e_3&0&0&0 \end{array}}\right)$$ 
would left annihilate  the $\beta$ which appears in $\psi_2$.  However the Poincar\'e series for $B$ assures us that the dimension of $Ext_B^{3,3}(\kk,\kk)$ is seven, which means that $\vec{e}$ would be in the span of the rows of $\alpha'$.  Now observe that no nonzero combination of the rows of $\alpha'$ could produce $\vec{e}$.  We conclude that $d_1=0$.

Since $d_1$ is zero, $Ext^{3,4}(\kk,\kk)=Ext^{4,4}(\kk,\kk)=0$.  Now observe that  the absence of $Ext^{4,4}(\kk,\kk)$ means that $Ext^{5,5}(\kk,\kk)$ must also be zero, so that $d_2=d_3$.  For $Ext^{3,5}(\kk,\kk)$ to be nonzero, the matrix defining the map $R^3 \oplus B[-5]^{d_2}  \to R^2$ must contain sums of elements from $S_1S_2S_3$ which annihilate $\gamma'$.  The elements of $S_1S_2S_3$ span a one dimensional subspace of $C_3$ with basis $\{nps\}$.  Suppose $nps(\ h_1\ \ h_2\ \ h_3\ )\gamma'=$ for some $h_i\in\kk$.  Since  in $C_4$  $npsv=npsw=npsx_1$, we get $h_1+h_2+h_3=0$, and thus $nps(\ h_1\ \ h_2\ \ h_3\ )$ is just   a row of $\beta'$ multiplied on the left by $np$.  Therefore the rows of $\beta'$ generate  the left annihilator of $\gamma'$ and $d_3=d_2=0$, which means $(R^\bullet,\psi)$ is exact.

  In general one would not expect information from resolutions over other algebras to be useful in resolving $_{C}\kk$, however  the   relations of $C$ and $B$ both follow the pattern that the left annihilator of an element of $S_i$ is generated by elements of $S_{i-1}$, so that information from $B$ is applicable to $C$.     
   Thus from the fact that $(R^\bullet,\psi)$ is exact, we see that the left annihilator of $\beta$ is generated by the rows of $\alpha$, the left annihilator of $\delta$ is generated by $\eta$,  the left annihilator of $\beta'$ is generated by $\lambda_{m+1}$, and  the left annihilator of $\gamma'$ is generated by the rows of $\beta'$.  
\end{proof}

\begin{thm} \label{main}  For any integer $m\ge 3$ the complex $P^\bullet$ is a minimal projective resolution of the left $C$-module $\kk$.  It follows that the algebra $C$ has global dimension $m$ and   $\Ext^{ij}_{C}(\kk,\kk)=0$ for all\/ $i<j\le m$.  Moreover $C$ is not a  Koszul algebra because   $\Ext^{m,m+1}_{C}(\kk,\kk)\ne 0$.   
\end{thm}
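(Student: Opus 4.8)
The plan is to prove that $(P^\bullet,\lambda)$ is a minimal projective resolution of $_C\kk$ and then to extract every assertion of the theorem from the internal degrees in which the $P^i$ are generated. I would proceed in three stages: (i) check that $P^\bullet$ is a complex; (ii) prove exactness by showing at each spot that the rows of $\lambda_{i+1}$ generate the full left annihilator of $\lambda_i$; and (iii) observe minimality, after which the Betti numbers and the vanishing or nonvanishing of $\Ext$ can be read off directly.

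For stage (i), each $\lambda_i$ is block diagonal (with the staircase offsets displayed above), so the identity $\lambda_{i+1}\lambda_i=0$ reduces to a finite list of block products: $\eta\delta$, $\delta\epsilon$, $\alpha'\beta$, $\beta\gamma$, $\gamma\chi_2$, $\chi_j\chi_{j+1}$, $\lambda_m\beta'$, and the products $\epsilon\zeta$ and $\zeta\zeta$ from the $z$-tail. After matching the column partition of $\lambda_{i+1}$ to the row partition of $\lambda_i$, each product vanishes directly from the defining relations---for instance $\eta\delta=0$ from $np=nq=nr$, $\delta\epsilon=\alpha'\beta=0$ from $ps=pt,\ qt=qu,\ rs=ru$, and the tail products from $vx_2=wx_2=x_1x_2=0$, $y_{i-1}x_i+z_{i-1}y_i=0$, $sz_1=tz_1=uz_1=0$ and $z_iz_{i+1}=0$. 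This bookkeeping is routine once the partitions are aligned.

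Stage (ii) is the heart of the argument. Since $\lambda_i$ is block diagonal, a left annihilator splits as a direct sum over the blocks, so it suffices to compute the left annihilator of each block and match its generators against the corresponding rows of $\lambda_{i+1}$. The ``head'' blocks are handled verbatim by the earlier lemmas: Lemma \ref{nokilla} gives zero annihilators for $\eta,\eta',\alpha$ (and the same argument covers $\alpha'$); Lemma \ref{B} gives the annihilator of $\delta$ as $\eta$, of $\beta$ as the rows of $\alpha$, of $\beta'$ as $\lambda_m$, and of $\gamma'$ as the rows of $\beta'$; and Lemma \ref{g'} gives the annihilator of $x_2$ as the rows of $\gamma'$. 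The remaining ``tail'' blocks $\epsilon,\zeta_j,\gamma,\chi_j$ and the lone generators $x_k$ obey the same self-similar rule---the left annihilator of an $S_i$-entry is generated by $S_{i-1}$-combinations---so their annihilators are generated by the neighboring block ($\delta$ annihilates $\epsilon$, $\gamma$ annihilates $\chi_2$, $\chi_j$ annihilates $\chi_{j+1}$, and so on). The main obstacle is to prove these generators \emph{exhaustive}, i.e.\ that no further syzygies intrude; for the head this is already contained in the proof of Lemma \ref{B}, which pins down the relevant $\Ext$-dimensions via the Hilbert series of $B$, and for the $z$-tail I would give the analogous count, checking that at each cohomological degree the number of annihilator generators equals the prescribed rank of $P^{i+1}$, so that the listed rows of $\lambda_{i+1}$ account for the entire annihilator.

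Finally, every entry of every $\lambda_i$ lies in the augmentation ideal, so no generator is redundant and the resolution is minimal; hence $\dim_\kk\Ext^{i,j}_C(\kk,\kk)$ equals the number of copies of $C$ in $P^i$ generated in internal degree $j$. For $i\le m-1$ we have $P^i=(C[-i])^{n_i}$, generated purely in degree $i$, so $\Ext^{i,j}_C=0$ unless $j=i$; this forces $\Ext^{i,j}_C=0$ for all $i<j\le m$, which is exactly $m$-Koszulity. Since $P^m=C[-m-1]$ is generated in degree $m+1$, we get $\Ext^{m,m+1}_C(\kk,\kk)=\kk\ne 0$ with $m<m+1$, so $C$ is not Koszul. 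Lastly, Lemma \ref{nokilla} shows $\lambda_m$ has zero left annihilator, so $P^\bullet$ is exact at $P^m$ and there is no $P^{m+1}$; the resolution therefore has length exactly $m$ and the global dimension of $C$ equals $m$.
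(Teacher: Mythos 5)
Your overall strategy coincides with the paper's: verify $\lambda_{i+1}\lambda_i=0$ blockwise, prove exactness by showing the rows of $\lambda_{i+1}$ exhaust the left annihilator of each block of $\lambda_i$, invoke Lemmas \ref{nokilla}, \ref{g'} and \ref{B} for the blocks $\eta,\eta',\alpha,\delta,\beta,\beta',\gamma',x_2,\lambda_m$, and then read off minimality, the vanishing of $\Ext^{i,j}_C(\kk,\kk)$ for $i<j\le m$, the class in $\Ext^{m,m+1}_C(\kk,\kk)$, and the global dimension from the internal degrees of the $P^i$. That last part of your write-up is fine and matches the paper.

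The gap is in your treatment of the tail blocks $\epsilon$, $\zeta_j$, $\gamma$, $\chi_j$ and the lone entries $x_k$. You correctly identify exhaustiveness of the listed annihilator generators as ``the main obstacle,'' but the ``analogous count'' you propose is not available: no Hilbert series for $C$ is computed anywhere (the series in Lemma \ref{B} is for the auxiliary algebra $B$ only), and comparing ``the number of annihilator generators'' to ``the prescribed rank of $P^{i+1}$'' is vacuous, since the rank of $P^{i+1}$ was chosen to equal the number of rows of $\lambda_{i+1}$ --- the whole question is whether those rows suffice. The paper closes this with a different device that you should supply: each tail block, or the column vector it must annihilate, already occurs as (a sub-column of) a column of $\lambda_1$, where exactness is known because $\lambda_2\lambda_1$ spans $I$. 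For instance, if $(d_1\ d_2\ d_3)\epsilon=0$, then padding with zeros gives a syzygy of $\lambda_1$, hence a combination of left multiples of the rows of $\lambda_2$, and the block structure of $\lambda_2$ forces $(d_1\ d_2\ d_3)$ into the row span of $\delta$; similarly any annihilator of $\gamma$ must annihilate the column $(v,w,x_1,y_1)^t$ of $\lambda_1$ and hence lies in the row span of $\beta$, and the same pullback handles $z_{i+1}$, $\chi_{i+1}$ and $x_i$ (for the latter one also checks that the second row of $\chi_{i-1}$ does not annihilate $x_i$). Without this reduction to $P^1$, your argument for exactness at the middle terms is incomplete.
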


\begin{proof} 
Direct calculation shows that $\lambda_i\lambda_{i-1}=0$ for all $i$ so that $P^\bullet$ is a complex.  It is clear from the block form of the matrices $\lambda_i$  that their rows are linearly independent.     Since nothing annihilates $n$ from the left, it is clear that    $P^\bullet$ is exact at $P^m$ and that none of the $\lambda_i$ needs an additional row to annihilate $\eta$ or $\eta'$.
 The complex is exact at $P^1$ since the product $\lambda_2\lambda_1$ gives the defining relations for $C$.   We will show that $P^\bullet$ is exact elsewhere by examining the component blocks in the matrices $\lambda_i$.    

The block  $\epsilon$ appears in   $\lambda_1$ and is annihilated on the left by the $\delta$ which appears in $\lambda_2$.  For $i>1$,  the column of $\lambda_i$ containing $\epsilon$ has no other nonzero entries.  Suppose that for some $d_i\in C$ we have $(\ d_1 \ \ d_2\ \ d_3\ )\ \epsilon=0$.  Since $\lambda_2\lambda_1$ give a basis for  $I$, it follows that 
$$\left({\begin{array}{ccccccccccc}0&0&0&0&d_1&d_2&d_3&0&\cdots&0 \end{array}}\right)$$
is a sum of left multiples of the rows of $\lambda_2$.  By the block form of $\lambda_2$ this means that  $(\ d_1 \ \ d_2\ \ d_3\ )$ is a sum of left multiples of the rows of $\delta$.  Therefore the rows of $\delta$ generate the left annihilator of $\epsilon$. 
In the same manner, one sees that  the rows of $\epsilon$ generate the left annihilator of $z_1$ and that $z_i$  generates the left annihilator of $z_{i+1}$.
 
   While $\gamma$ does not appear in $\lambda_1$, the matrix $(v,w,x_1,y_1)^t$ does, and is annihilated by $\beta$.  Any row annihilating $\gamma$ must also annihilate $(v,w,x_1,y_1)^t$, and hence the rows of $\beta$ generate the left annihilator of $\gamma$.  Likewise, since $(x_i,y_i)^t$ appears in $\lambda_1$ we see that the rows of $\gamma$ generate the left annihilator of $\chi_2$ and that the rows of $\chi_i$ generate the left annihilator of $\chi_{i+1}$.  For $i>2$, when $x_{i}$ appears as the only nonzero entry in a column of $\lambda_{m-1-i}$, it can be annihilated by at most the rows of $\chi_{i-1}$ since  $\chi_{i-1}$ annihilates the $(x_i, y_i)^t$ in $\lambda_1$.   
Since the second row of $\chi_{i-1}$ does not annihilate $x_i$, we conclude that $x_{i-1}$ generates the left annihilator of $x_i$.

 Lemmas \ref{nokilla}, \ref{g'} and \ref{B} complete the proof that $P^\bullet$ is exact.  Since $C$ is a graded algebra and the maps $\lambda_i$ are all of degree at least one, this resolution is minimal.
  \end{proof}   

\bibliographystyle{amsplain}
\bibliography{bibliog}

\end{document}